\let\@fnsymbol\@arabic
\theoremstyle{plain}
\newtheorem{theorem}{Theorem}[section]
\newtheorem{lemma}{Lemma}[section]
\theoremstyle{definition}
\theoremstyle{remark}
\newtheorem*{remark*}{Remark}
\title{Effects of local fields in a dissipative Curie-Weiss model: Bautin bifurcation and large self-sustained oscillations}
\author[1,2]{Francesca Collet}
\author[2,3]{Marco Formentin}
\affil[1]{\footnotesize Delft Institute of Applied Mathematics, Delft University of Technology, van Mourik Broekmanweg 6, 2628~XE~Delft (The Netherlands).}
\affil[2]{\footnotesize Dipartimento di Matematica ``Tullio Levi-Civita'', via Trieste 63, 35121 Padova (Italy).}
\affil[3]{\footnotesize Padova Neuroscience Center, via Giuseppe Orus 2, 35131 Padova (Italy) \newline \newline \vspace{.3cm} {\small \emph{E-mail addresses}:~$\{$francesca.collet, marco.formentin$\}$@unipd.it}}
\date{}
\begin{document}

\maketitle

\begin{abstract}
\noindent We modify the spin-flip dynamics of a Curie-Weiss model with dissipative interaction potential \cite{DaPFiRe13} by adding a site-dependent i.i.d. random magnetic field. 
The purpose is to analyze how the addition of the field affects the time-evolution of the observables  in the macroscopic limit. 
Our main result shows that a Bautin bifurcation point exists and that, whenever the field intensity is sufficiently strong and the temperature sufficiently low, a periodic orbit  emerges through a global bifurcation in the phase space, giving origin to a large-amplitude rhythmic behavior. 

\vspace{0.3cm}

\noindent {\bf Keywords:} Bautin bifurcation $\cdot$ collective noise-induced periodicity $\cdot$ disordered systems $\cdot$ mean-field interaction $\cdot$ non-equilibrium systems $\cdot$ random potential $\cdot$ saddle-node bifurcation of periodic orbits   \\ \\
\end{abstract}

\section{Introduction}

Large volume dynamics of noisy interacting units may display robust collective periodic behavior. Self-sustained oscillations are commonly encountered in ecology \cite{Tur03}, neuroscience \cite{ErTe10,LiG-ONeS-G04} and socioeconomics \cite{WeHa12}. 
From a modelling point of view, great attention has been given to mean-field interacting particle systems, due to their analytical tractability. In this context, the attempt of explaining rigorously possible origins of self-organized rhythms identified various essential aspects to enhance the emergence of  such coherent and structured dynamics.  Seminal works \cite{Sch85a,Sch85b} have highlighted the importance of the interplay between interaction and noise. In particular, the role of noise is twofold: on the one hand, noise can  lead to oscillatory states in systems whose deterministic counterparts do not display any periodic behavior (\emph{noise-induced periodicity}) \cite{Sch85a,Sch85b,ToHeFa12}; on the other, it can facilitate the transition from incoherence to  macroscopic pulsing (\emph{excitability by noise}) \cite{CoDaPFo15,LiG-ONeS-G04,LuPo_a,LuPo_b}. Moreover, rhythmic behaviors are intrinsically non-equilibrium phenomena, naturally in contrast with stochastic reversibility \cite{BeGiPa10,GiPo15}, and hence a reversibility-breaking mechanism needs to enter the microscopic design of the models. Quite a number of such mechanisms have been taken into account: addition of a \emph{driving force} or of a \emph{random intrinsic frequency} in phase rotator systems \cite{BoNeSp92,GiPaPePo12,ShKu86} ; addition of \emph{delay} in the information transmission and/or \emph{frustration} in the interaction network in multi-population discrete particle systems \cite{AlMi,CoFoTo16,DiLo17,FeFoNe09,Tou}; to name a fews.
A further mechanism  that has been lately proposed and investigated is \emph{dissipation}. Indeed, a class of irreversible models can be obtained as  perturbation of classical reversible dynamics by introducing a friction term damping the interaction potential \cite{AnTo18,CoDaPFo15,DaPFiRe13}. The simplest interacting particle system within this  family is the dissipative version of the Curie-Weiss model presented in \cite{DaPFiRe13}: the standard spin-flip dynamics are modified so that the interaction energy undergoes a dissipative and diffusive stochastic evolution. In the infinite volume limit, for sufficiently strong interaction and zero (or sufficiently small) noise, this system exhibits stable self-sustained oscillations \ emerging via a Hopf bifurcation.\\
In the present paper we modify the noiseless version of the dissipative Curie-Weiss model  introduced in \cite{DaPFiRe13} by adding some disorder: we embed the particle system in a site dependent, i.i.d., binary and symmetric, static random environment. Despite the simple structure of the random field we are considering, the addition of local fields makes the  phase diagram of the macroscopic evolution very rich and interesting. In particular, in the parameter space there exists a Bautin bifurcation point that allows to identify two half-planes for the field intensity corresponding to a small and a large-disorder regime, where the impact of the random environment is irrelevant and relevant respectively. In the small-disorder case, the system behaves as in absence of random field. As for the homogeneous model \cite[Thm.~3.1]{DaPFiRe13}, the emergence of self-sustained oscillations is due to the occurrence of a supercritical Hopf bifurcation.  On the contrary, whenever the disorder becomes sufficiently strong and the temperature is sufficiently low, a stable periodic orbit arises through a \emph{saddle-node bifurcation of limit cycles} rather than a Hopf bifurcation. From a technical viewpoint, we would like to stress that, exploiting the special structure of the vector field governing the infinite volume dynamics, we are able to extend globally the fold of limit cycles prescribed only locally by the Bautin bifurcation.\\


The paper is organized a follows. In Section~\ref{sect:model_and_results} we describe the model under consideration and we state our main results. All the proofs are postponed to Section~\ref{sect:proofs}.

\bigskip

\section{Description of the model and results}
\label{sect:model_and_results}

We consider a simplified version of the Curie-Weiss model with dissipation in \cite{DaPFiRe13} and we introduce inhomogeneity in the structure of the system via a site-dependent, static, random magnetic field (acting as a random environment).
Let $\sigma = \left( \sigma_j \right)_{j=1}^N \in \{-1,+1\}^N$ denote the spin configuration and let $\eta = \left( \eta_j \right)_{j=1}^N \in \{-1,+1\}^N $, a sequence of  i.i.d. random variables with distribution \mbox{$\mu = \frac{1}{2} (\delta_{-1} + \delta_{+1})$}, denote the disorder. Given a realization of the environment $\eta$, the stochastic process $\{\sigma(t)\}_{t \geq 0}$ is described as follows. For $\sigma \in \{-1,+1\}^N$, let us define $\sigma^i$ the configuration obtained from $\sigma$ by flipping the $i$-th spin. The spins will be assumed to evolve with one spin-flip dynamics: at any time $t$, the system may experience a transition $\sigma \longrightarrow \sigma^i$ at rate $1 -  \tanh [\sigma_i (\lambda_N + h \eta_i)]$, where $h \geq 0$ and $\{ \lambda_N (t) \}_{t \geq 0}$ is a stochastic process on $\mathbb{R}$, driven by the stochastic differential equation
\begin{equation}\label{eqn:evolution_of_lambda}
d \lambda_N(t) = - \lambda_N(t) dt  + \beta dm_N^{\sigma}(t) \,,
\end{equation}
with $\beta > 0$ and 
$m_N^{\sigma}(t) = \frac{1}{N} \sum_{j=1}^N \sigma_j(t)$. As far as the parameters: $\beta$ is the inverse temperature and $h$ is the intensity of the local fields. \\
The two terms in the argument of the hyperbolic tangent have different effects: the first one encodes the ferromagnetic coupling between spins, while the second pushes each spin to point the direction prescribed by the field associated with its own site. Observe that, in view of the evolution \eqref{eqn:evolution_of_lambda}, the interaction is damped between two consecutive spin flips.

From a formal viewpoint, for any given realization of $\eta$, we are considering  the Markov process $(\sigma(t), \lambda_N(t))$ on $\{-1,+1\}^N \times \mathbb{R}$ evolving with infinitesimal generator 
\begin{equation}\label{MicroSyst:InfGen}
L^{\eta}_N f (\sigma, \lambda) = \sum_{j=1}^N \left[ 1 -  \sigma_j \tanh ( \lambda + h \eta_j) \right] \left[ f \left( \sigma^j, \lambda  -  \tfrac{2\beta \sigma_j}{N}  \right) - f(\sigma, \lambda)\right] -\lambda \partial_{\lambda} f (\sigma,, \lambda) \,.
\end{equation}
In addition to the usual empirical magnetization, we define also the empirical averages
\[
m^{\sigma\eta}_{N}(t) = \frac{1}{N} \sum_{j=1}^N  \sigma_j(t) \, \eta_j \quad \mbox{ and } \quad \bar{\eta}_{N} = \frac{1}{N} \sum_{j=1}^N \eta_j \,.
\]
Let $E_N$ be the image of $\{-1,+1\}^{N} \times \mathbb{R}$ under the map ${\Phi^{\eta}:} (\sigma,\lambda_N) \mapsto (m_N^{\sigma},m_N^{\sigma \eta},\lambda_N)$. The microscopic dynamics on the configurations, corresponding to the generator \eqref{MicroSyst:InfGen}, induce a Markovian evolution on $E_N$ for the process $\{(m_N^{\sigma}(t), m_N^{\sigma\eta}(t), \lambda_N(t))\}_{t \geq 0}$, that in turn evolves with generator
\begin{multline}\label{InfGen:OrdPar}
\mathcal{L}_N f \left( m^{\sigma}, m^{\sigma\eta}, \lambda \right) = \sum_{j,k = \pm 1}  C_{N}(j,k)   \left[f\left(m^{\sigma} - \tfrac{2j}{N}, m^{\sigma\eta} -\tfrac{2jk}{N}, \lambda  -  \tfrac{2\beta j}{N} \right)- f \left( m^{\sigma}, m^{\sigma\eta}, \lambda \right)\right] \\
- \lambda \partial_{\lambda} f \left(m^{\sigma}, m^{\sigma\eta}, \lambda \right) \,,
\end{multline}
where $C_{N}(j,k) = \frac{N}{4} \left(1 + k \bar{\eta}_{N} + j m^{\sigma} + jk m^{\sigma\eta} \right) \left[ 1  -  j \tanh(\lambda + kh) \right]$, for all $j,k \in \{-1,+1\}$. Observe that the term $\frac{N}{4}(\dots)$ counts the number of pairs $(\sigma_i,\eta_i)$, $i\in\{1,\dots,N\}$, such that $\sigma_i=j$ and $\eta_i=k$.

The generator \eqref{InfGen:OrdPar} can be derived from \eqref{MicroSyst:InfGen} via the martingale problem and the property 
\[
L^{\eta}_N (f \circ \Phi^{\eta}) (\sigma, \lambda) = (\mathcal{L}_N f) \circ \Phi^{\eta} (\sigma, \lambda) = (\mathcal{L}_N f)(m^{\sigma}, m^{\sigma\eta}, \lambda).
\]
The process $\{(m_N^{\sigma}(t), m_N^{\sigma\eta}(t), \lambda_N(t))\}_{t \geq 0}$ is an \emph{order parameter}, in the sense that its dynamics completely describe the dynamics of the original system. We are going to characterize its limiting evolution.
We can derive the infinite volume dynamics for our model via weak convergence in $\mathcal{D}_{\mathbb{R}^3}(\mathbb{R}^+)$, the space of c{\`a}dl{\`a}g trajectories from $\mathbb{R}^+$ to $\mathbb{R}^3$.
\begin{theorem}[Law of large numbers]\label{thm:macro_evolution}
Suppose that $( m_N^{\sigma}(0), m_N^{\sigma\eta}(0), \lambda(0) )$ converges weakly to the constant $(m_0^{\sigma}, m_0^{\sigma\eta},\lambda_0)$. Then, $\mu$-almost surely, the stochastic process $\{(m_N^{\sigma}(t), m_N^{\sigma\eta}(t), \lambda_N(t))\}_{t \geq 0}$ converges weakly in $\mathcal{D}_{\mathbb{R}^3}(\mathbb{R}^+)$ to the unique solution of
%
%
\begin{equation}\label{MKV:equation}
\left\{
\begin{array}{lcl}
\dot{m}^{\sigma}_t &=& -2 m^{\sigma}_t  +  \tanh (\lambda_t + h)  +  \tanh (\lambda_t - h)\\
\dot{m}^{\sigma \, \eta}_t &=& -2 m^{\sigma \, \eta}_t  +  \tanh (\lambda_t + h)  -  \tanh (\lambda_t - h)\\
\dot{\lambda}_t &=& - \lambda_t  + \beta \left[  -  2 m^{\sigma}_t + \tanh (\lambda_t + h) + \tanh (\lambda_t - h) \right]. 
\end{array}
\right.
\end{equation}
\end{theorem}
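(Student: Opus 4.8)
The plan is to run the standard Ethier--Kurtz martingale-problem argument, taking advantage of the fact that $X_N := (m_N^\sigma, m_N^{\sigma\eta}, \lambda_N)$ is \emph{itself} a Markov process on $E_N \subset \mathbb{R}^3$ with the explicit generator $\mathcal{L}_N$ of \eqref{InfGen:OrdPar}; no propagation-of-chaos on the full spin configuration is needed. The displacements in \eqref{InfGen:OrdPar} are of order $1/N$ while the rates $C_N(j,k)$ are of order $N$, so the jump part should collapse to a first-order transport operator and the limiting evolution should be deterministic. Thus I expect to prove that, for every $f \in C_c^\infty(\mathbb{R}^3)$, $\mathcal{L}_N f \to \mathcal{L} f$ uniformly on compacts, where $\mathcal{L}f(x) = F(x)\cdot\nabla f(x)$ and $F$ is exactly the vector field on the right-hand side of \eqref{MKV:equation}. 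Tightness together with the vanishing of the martingale brackets will then force any weak limit to solve the deterministic system, and Lipschitz-continuity of $F$ will yield uniqueness and hence convergence of the whole sequence.

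First I would establish the generator convergence. A first-order Taylor expansion turns each increment in \eqref{InfGen:OrdPar} into $-\tfrac{2}{N}\big(j\,\partial_{m^\sigma} f + jk\,\partial_{m^{\sigma\eta}} f + \beta j\,\partial_\lambda f\big)$ plus an $O(1/N^2)$ remainder; multiplying by $C_N(j,k)=O(N)$, summing over $j,k\in\{\pm1\}$, and adding the explicit $-\lambda\,\partial_\lambda f$ term (which passes to the limit unchanged and supplies the dissipative $-\lambda$ of the third equation) produces a drift whose $\partial_{m^\sigma} f$-coefficient equals $-2m^\sigma + \tanh(\lambda+h) + \tanh(\lambda-h) + \bar\eta_N\big[\tanh(\lambda+h) - \tanh(\lambda-h)\big]$, with entirely analogous expressions for the remaining two components. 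This is the only point at which the environment survives the projection to $X_N$: the coefficients of $\mathcal{L}_N$ depend on $\eta$ solely through $\bar\eta_N$. Since the $\eta_j$ are i.i.d.\ and symmetric, the strong law of large numbers gives $\bar\eta_N\to 0$ for $\mu$-almost every realization, and on this full-measure event the spurious $\bar\eta_N$-terms drop out, leaving precisely the vector field $F$ of \eqref{MKV:equation}. This is the content of the ``$\mu$-almost surely'' qualifier.

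Next I would prove tightness of the laws of $X_N$ in $\mathcal{D}_{\mathbb{R}^3}(\mathbb{R}^+)$ through the Aldous--Rebolledo criterion. The first two coordinates are automatically valued in $[-1,1]$, being empirical averages of quantities in $\{\pm1\}$, so the only genuine work is to tame the unbounded coordinate $\lambda_N$. Applying $\mathcal{L}_N$ to $\lambda^2$ and pitting the confining $-\lambda$ drift against the uniformly bounded displacements $2\beta/N$ fired at total rate $O(N)$ yields, via Gronwall, uniform-in-$N$ bounds on $\mathbb{E}\big[\sup_{s\le t}|\lambda_N(s)|^2\big]$ over compact intervals; the same estimate controls the predictable quadratic variation of the martingale $M_N^f(t) = f(X_N(t)) - f(X_N(0)) - \int_0^t \mathcal{L}_N f(X_N(s))\,ds$ by an $O(1/N)$ quantity. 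Securing these moment bounds for $\lambda_N$ is the principal technical obstacle, but the dissipative structure makes it routine.

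Finally, identification and uniqueness. Along any convergent subsequence $X_N\Rightarrow X$, the generator convergence and the $O(1/N)$ bracket bound let me pass to the limit in $M_N^f$: the limiting object $f(X_t) - f(X_0) - \int_0^t \mathcal{L}f(X_s)\,ds$ is a martingale with zero quadratic variation, hence $\mathbb{P}$-a.s.\ constant, so $X$ satisfies \eqref{MKV:equation} in integrated form with initial datum $(m_0^\sigma, m_0^{\sigma\eta}, \lambda_0)$ furnished by the assumed convergence of the initial conditions. Because $\tanh$ is globally Lipschitz, $F$ is Lipschitz and Cauchy--Lipschitz provides a unique such solution; this pins down the limit, upgrading subsequential convergence to weak convergence of the entire sequence and completing the proof.
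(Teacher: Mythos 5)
Your overall architecture coincides with the paper's: both proofs exploit that $(m_N^{\sigma},m_N^{\sigma\eta},\lambda_N)$ is itself Markov with generator \eqref{InfGen:OrdPar}, Taylor-expand to show $\mathcal{L}_N f\to\mathcal{L}f$ uniformly on compacts with the disorder surviving only through $\bar\eta_N$ (killed $\mu$-a.s.\ by the strong law), and conclude via the Ethier--Kurtz machinery together with uniqueness of the limiting (deterministic, Lipschitz) martingale problem. The one place where you genuinely diverge is the control of the unbounded coordinate $\lambda_N$. You propose the generic route: apply $\mathcal{L}_N$ to $\lambda^2$, use the dissipative drift plus Gronwall and Doob to bound $\mathbb{E}\bigl[\sup_{s\le t}|\lambda_N(s)|^2\bigr]$, and feed this into Aldous--Rebolledo. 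The paper instead observes that each spin flip moves $\lambda_N$ by exactly $\beta$ times the increment of $m_N^{\sigma}$, so $U(m^\sigma,\lambda)=\tfrac12(\beta m^\sigma-\lambda)^2$ is \emph{invariant under every jump}; consequently $U(X_N(t))$ evolves deterministically with $\mathcal{L}_N U\le \beta^2/4$, giving the pathwise bound $U(X_N(t))\le U(X_N(0))+\beta^2 t/4$ and hence compact containment (Lemma~\ref{lmm:compact_containment}) with no martingale or moment estimates at all. Your route works but is heavier, and it has one small soft spot you should patch: weak convergence of $\lambda_N(0)$ to a constant does not give a uniform bound on $\mathbb{E}[|\lambda_N(0)|^2]$, so the Gronwall bound on second moments cannot be closed without either assuming such moments or localizing with stopping times --- precisely the device the paper uses, where only tightness of the initial data (i.e.\ $P(U(X_N(0))\ge c(\varepsilon))\le\varepsilon$) is needed. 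With that localization added, your argument is complete and delivers the same conclusion.
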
 

Next we want to characterize the phase diagram for \eqref{MKV:equation}. Observe that the first and third equation in \eqref{MKV:equation} form an independent subsystem. In other words, we are reduced to analyze the attractors on $[-1,+1] \times \mathbb{R}$ for the planar system
\begin{equation}\label{MKV:reduced}
(\dot{m}^{\sigma}_t,\dot{\lambda}_t) = V (m^{\sigma}_t, \lambda_t) \,,
\end{equation} 
with vector field given by
\[
V(x,y) = \big(-2x  +  \tanh(y+h)  +  \tanh(y-h), \, -y + \beta \big[  -  2 x + \tanh (y+h) + \tanh (y-h) \big] \big).
\]
%
We remark that the only fixed point for \eqref{MKV:reduced} is the origin. Moreover, it is easy to see that $(0,0)$ is linearly stable whenever $\beta < \frac{3}{2} \cosh^2 (h)$; whereas, the local stability is lost for $\beta > \frac{3}{2} \cosh^2 (h)$. Much more than a local analysis can be obtained for \eqref{MKV:reduced}. The global situation is described in the next theorem and then qualitatively illustrated in Figure~\ref{Fig:PD}.

\begin{theorem}[Phase Diagram]\label{Thm_PD} 
Consider the dynamical system \eqref{MKV:reduced} and, for every $h \geq 0$, set $\beta_c(h) := \frac{3}{2} \cosh^2 (h)$ and $h_{\mathrm{tc}} := \frac{1}{2} \ln (2 + \sqrt{3})$. We have the following:
\begin{enumerate}[label=(\alph*),ref=(\alph*)]
\item \label{Thm_PD:local_bifurcation_case}
Suppose $h \leq h_{\mathrm{tc}}$. Then,
\begin{enumerate}
\item 
for $\beta \leq \beta_c(h)$ the origin is a global attractor.
\item 
for $\beta > \beta_c(h)$ the system admits a unique limit cycle attracting all the trajectories except for the fixed point.
\end{enumerate}
\item \label{Thm_PD:global_bifurcation_case}
Suppose $h > h_{\mathrm{tc}}$. Then, there exists $0 < \beta_{\star}(h) \leq \beta_c(h)$ such that
\begin{enumerate}
\item 
for $\beta < \beta_{\star}(h)$ the origin is a global attractor.
\item 
for $\beta_{\star}(h) \leq \beta < \beta_c(h)$ the origin is locally stable and coexists with a stable periodic orbit.
\item
for $\beta \geq \beta_c(h)$ the system admits a unique limit cycle attracting all the trajectories except for the fixed point.
\end{enumerate}
\end{enumerate}
In particular, the tricritical point $( h_{\mathrm{tc}}, \beta_{\mathrm{tc}}) := ( h_{\mathrm{tc}}, \beta_{\mathrm{c}}(h_{\mathrm{tc}})) = ( \frac{1}{2} \ln (2+\sqrt{3}), \frac{9}{4} )$ is a Bautin bifurcation point.
\end{theorem}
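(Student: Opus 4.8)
The plan is to reduce \eqref{MKV:reduced} to a Liénard equation and then combine a Bendixson (divergence) argument, the classical Liénard uniqueness theorem, a Hopf computation and the theory of rotated vector fields. Write $\Psi(y):=\tanh(y+h)+\tanh(y-h)$, so that the second component of $V$ equals $-y+\beta(-2x+\Psi(y))=-y+\beta\dot x$ and $\dot y-\beta\dot x=-y$. The \emph{linear} change of coordinates $z:=2(y-\beta x)$ then conjugates \eqref{MKV:reduced} to the Liénard system $\dot y=z-F(y)$, $\dot z=-2y$ with $F(y)=3y-\beta\Psi(y)$, equivalently
\[
\ddot y+\big(3-\beta\Psi'(y)\big)\dot y+2y=0,\qquad \Psi'(y)=\cosh^{-2}(y+h)+\cosh^{-2}(y-h),
\]
whose damping is $f(y):=3-\beta\Psi'(y)=-\nabla\!\cdot V$. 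Because the coordinate change is a global diffeomorphism, periodic orbits correspond bijectively. Since $|\Psi|\le2$, a large disk is forward invariant, and as the origin is the only equilibrium, the Poincaré--Bendixson theorem forces every $\omega$-limit set to be either the origin or a periodic orbit; the whole problem thus becomes one of counting and locating limit cycles.

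Everything is governed by the even function $\Psi'$, for which $\Psi'(0)=2\cosh^{-2}h$ (so $f(0)=0\Leftrightarrow\beta=\beta_c(h)$), $\Psi''(0)=0$ and $\Psi'''(0)=-4\cosh^{-2}(h)\,(1-3\tanh^2 h)$. Hence $y=0$ is a strict maximum of $\Psi'$ when $\tanh^2 h<\tfrac13$ and a local minimum flanked by two symmetric maxima when $\tanh^2 h>\tfrac13$; the borderline $\tanh^2 h=\tfrac13$ is exactly $\cosh(2h)=2$, i.e.\ $h=\tfrac12\ln(2+\sqrt3)$, the threshold $h^\star$ of the statement. Two elementary consequences follow. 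First, $\nabla\!\cdot V=-3+\beta\Psi'(y)$, so when $\beta\,\max_y\Psi'(y)<3$ (with equality on at most a null set) Bendixson's criterion excludes periodic orbits; for $h\le h^\star$ the maximum is $\Psi'(0)$ and this is precisely $\beta\le\beta_c$. Second, when $f(0)=3-\beta\Psi'(0)<0$, i.e.\ $\beta>\beta_c$, the fact that $\Psi'$ is single-humped on $(0,\infty)$---monotone for $h\le h^\star$, one interior maximum for $h>h^\star$---makes $f$ change sign exactly once there; then $F$ is odd with a single positive zero, negative before it and increasing to $+\infty$ after it, and the classical Liénard theorem yields a \emph{unique, globally attracting} limit cycle.

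Part (a) is then immediate. For $h\le h^\star$ and $\beta\le\beta_c$, Bendixson together with Poincaré--Bendixson drives every trajectory to the locally stable origin, proving (i); for $\beta>\beta_c$ the Liénard argument gives the unique attracting cycle of (ii).

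For $h>h^\star$ the origin again loses stability at $\beta_c$, but the Hopf bifurcation is now \emph{subcritical}: writing $F(y)=c_1y+c_3y^3+O(y^5)$ with $c_1=3-\beta\Psi'(0)$ vanishing at $\beta_c$, the cubic coefficient $c_3=-\beta_c\Psi'''(0)/6$ is negative exactly when $\Psi'''(0)>0$, i.e.\ exactly for $h>h^\star$; a negative $c_3$ makes $f=F'$ negative near $0$, so the origin is weakly repelling at $\beta_c$ and is encircled by a small \emph{unstable} cycle for $\beta$ just below $\beta_c$, whereas the Liénard argument still produces an outer \emph{stable} cycle for $\beta\ge\beta_c$ (and, by continuity, just below). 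To assemble these globally I will use that $\{V_\beta\}$ is a family of rotated vector fields: $V\wedge\partial_\beta V=(-2x+\Psi(y))^2\ge0$, vanishing only on the nullcline $\dot x=0$. Hence limit cycles are nested and move monotonically in $\beta$, so a stable and an unstable cycle can only be born or annihilated in pairs through a saddle-node of limit cycles, or absorbed at the origin through the subcritical Hopf. Together with a Liénard multiplicity bound---in the window between the Bendixson threshold $3/\max_y\Psi'$ and $\beta_c$, the graph of $F$ has exactly one local maximum and one local minimum on $(0,\infty)$, limiting the cycles to two---this isolates a unique $\beta_\star\in(0,\beta_c)$ where the two cycles coalesce: no cycle for $\beta<\beta_\star$ (origin a global attractor, (i)); the two cycles coexisting with the stable origin for $\beta_\star\le\beta<\beta_c$, giving the stable orbit of (ii); and, for $\beta\ge\beta_c$, the inner cycle merged into the origin, leaving the unique stable cycle of (iii). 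The main obstacle is precisely this global step for $h>h^\star$: the local Hopf analysis and Bendixson only settle $\beta$ near $\beta_c$ and $\beta$ small, whereas the existence of $\beta_\star$ and---above all---the bound of at most two coexisting cycles are genuinely global; verifying the rotated-field monotonicity and the Liénard multiplicity estimate for this specific non-monotone $\Psi'$ (in particular that $\Psi'$ has a single interior maximum, and controlling the semistable coalescence at $\beta_\star$) is where the real work lies.
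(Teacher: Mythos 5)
Your proposal follows essentially the same architecture as the paper's proof: the linear change of variables to the Li\'enard form \eqref{MKV:Lienard}, the classical Li\'enard theorem for the unique attracting cycle when $\beta\ge\beta_c(h)$, a first-Lyapunov-coefficient computation placing the sub-/supercritical threshold at $h^\star=\tfrac12\ln(2+\sqrt3)$, and the theory of rotated vector fields to produce the semistable cycle at $\beta_\star(h)$ and exclude cycles below it. Two differences are worth recording. First, for part (a) with $\beta\le\beta_c(h)$ you use Bendixson's criterion directly (the divergence $-3+\beta\Psi'(\lambda)$ is nonpositive and vanishes only on $\{\lambda=0\}$, since $\Psi'$ attains its maximum at the origin when $h\le h^\star$); the paper instead defers to the proof for the homogeneous model in \cite{DaPFiRe13}, so your argument is more self-contained, and it also settles the borderline $h=h^\star$ without the second Lyapunov coefficient $\ell_2=-\tfrac{1}{360}$ that the paper computes at the tricritical point. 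Likewise, your computation of the criticality via the cubic coefficient of $F$ is an equivalent substitute for the paper's Guckenheimer--Holmes formula, and your explicit check $V\wedge\partial_\beta V=(-2x+\Psi(y))^2\ge0$ makes precise the rotated-field property the paper only asserts. Second, and this is the one genuine soft spot, your final step leans on a ``Li\'enard multiplicity bound'' (at most two cycles because $F$ has one positive local maximum and one local minimum); as you yourself note this is unverified, and it is not an off-the-shelf fact --- claims of the form ``$N$ positive critical points of $F$ imply at most $N$ limit cycles'' are of Lins--de Melo--Pugh type and false in general. The paper shows you do not need it: since cycles of the rotated family at distinct values of $\beta$ do not intersect and expand or contract monotonically, the cycles already accounted for at $\beta\ge\beta_\star(h)$ sweep out the whole plane, so a cycle at any smaller $\beta$ would have to cross one of them. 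That continuation argument, for which you already have every ingredient, replaces the multiplicity estimate and closes the gap; your Bendixson threshold $3/\max\Psi'>0$ then plays the role of the paper's explicit $\beta=0$ computation in guaranteeing $\beta_\star(h)>0$.
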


\begin{figure}[h]
\centering
\includegraphics[scale=1]{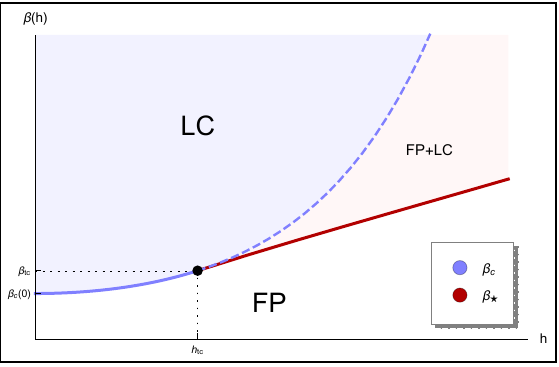}
\caption{
Illustration of the phase portrait for the dynamical system \eqref{MKV:reduced}. Each colored region represents a phase with attractor(s) indicated by the label: FP = fixed point; LC = limit cycle; FP+LC = coexistence of fixed point and limit cycle. The blue separation curve $\beta_{\mathrm{c}} = \frac{3}{2} \cosh^2(h)$ is the Hopf bifurcation curve; in particular, it is solid when the bifurcation is supercritical and dashed otherwise. The red  curve has been obtained numerically and represents the saddle-node bifurcation of periodic orbits.
The tricritical point $(h_{\mathrm{tc}},\beta_{\mathrm{tc}})$  is a Bautin bifurcation point.
}
\label{Fig:PD}
\end{figure}

\begin{remark*}
Finding, for any  $h > h_{\mathrm{tc}}$, the exact threshold value $\beta_{\star}(h)$ is hard  to achieve analytically.
On the contrary, it is easy to get a positive lower bound for such a transition point. See Appendix~\ref{App:study_function_g} for further details.
\end{remark*} 


We give a few explanations concerning the content of Theorem~\ref{Thm_PD}. All the technicalities can be found in Section~\ref{subsect:proof_thm_PD}. If $h \leq 
 h_{\mathrm{tc}}$ a periodic orbit appears through a local change in the stability of the fixed point. At $\beta = \beta_{\mathrm{c}}(h)$ a \emph{supercritical Hopf} bifurcation occurs. When $h > h_{\mathrm{tc}}$ the dependence of the attractors on the parameter $\beta$ is quite nontrivial, due to the combination of a \emph{subcritical Hopf} bifurcation and a \emph{saddle-node} bifurcation of limit cycles. Roughly speaking, there are three possible regimes for system \eqref{MKV:reduced}.
\begin{itemize}
\item
Fixed point phase. For $\beta < \beta_\star(h)$ the only stable attractor is the origin. 
\item
Coexistence phase. At $\beta = \beta_\star(h)$ a semistable cycle surrounding the origin is formed. By increasing the parameter $\beta$ from $\beta_\star(h)$, this cycle splits into two limit cycles, the outer being stable and the inner unstable. In this phase $(0, 0)$ is linearly stable. Therefore, the locally stable equilibrium coexists with a stable periodic orbit. 
\item
Periodic orbit phase. For $\beta = \beta_{\mathrm{c}}(h)$ the (subcritical) Hopf bifurcation occurs: the inner unstable limit cycle collapses at $(0,0)$ and disappears. At the same time the fixed point loses its stability and, thus, the external stable limit cycle is the only stable attractor left for $\beta \geq \beta_{\mathrm{c}}(h)$.
\end{itemize}

Notice that even if the two scenarios depicted in Theorem~\ref{Thm_PD} may look similar, as they both describe a transition of the type ``fixed point to limit cycle", this is not the case. They are in fact qualitatively very different.  If $h \leq h_{\mathrm{tc}}$ a \emph{small-amplitude periodic orbit} bifurcates from the origin at the critical point and then it grows gradually when increasing the parameter $\beta$. On the contrary, if $h >  h_{\mathrm{tc}}$ the stable limit cycle arises through a \emph{global bifurcation} and therefore, when they originate, the \emph{oscillations are already large}. As a consequence the trajectories are abruptly pushed far from the equilibrium point when the latter becomes unstable.\\

Recall we are assuming that the disorder is modeled by random variables with a {\em symmetric alphabet} $\{-1,+1\}$ and a {\em zero-mean distribution} $\mu=\frac{1}{2}\left(\delta_{-1}+\delta_{+1}\right)$. These two properties are key ingredients to achieve the global results in the phase diagram described in Theorem~\ref{Thm_PD}.  On the one hand, the fact that the field takes two symmetric values allows for algebraic cancellations (which otherwise do not take place) in the expansion of the generator that  lead to a decoupling of the macroscopic evolutions of the observables, reducing in fact a 3-dimensional system of ODEs to a 2-dimensional one. On the other hand, the symmetry of the distribution seems to be essential to profit from standard Li{\'e}nard's theorem \cite[Thm.~1, Sect.~3.8]{Per01} and easily control existence, uniqueness and stability of the limit cycle above the critical curve $\beta=\beta_{\mathrm{c}}(h)$ and far from the Bautin bifurcation point $(h_{\mathrm{tc}}, \beta_{\mathrm{tc}})$. However we believe that for the phase diagram in Figure~\ref{Fig:PD} to hold, the zero-mean distribution assumption may be dropped. It can be checked that the  Hopf bifurcation curve and the Bautin bifurcation point exist even when taking $\eta_i \in \{-1,+1\}$ distributed, more generally, according to $\mu=p\delta_{-1}+(1-p)\delta_{+1}$, for some $p\in[0,1]$. Actually the critical curve $\beta=\beta_c(h)$ and the tri-critical point $(h_{\mathrm{tc}}, \beta_{\mathrm{tc}})$ are the same for all the values of $p$. The local analysis can be carried out exactly as done in Section~\ref{Thm_PD} for $p=\frac{1}{2}$. Whenever $p \neq \frac{1}{2}$, the lack of symmetry makes very hard providing instead a rigorous global analysis. Nevertheless, numerical simulations suggest that also in this case, for any value of $h>h_{tc}$, a stable periodic orbit arises through a saddle-node bifurcation of limit cycles, as shown in Figure~\ref{Fig:PD_gen}. \\

To conclude, we can see that the addition of local fields has a significant impact as the nature of the bifurcation may be drastically changed for large enough field intensity and sudden self-sustained large-amplitude oscillations may be induced in the system.  In the $h=0$ case the emergence of the limit cycle always and only occurs through a supercritical Hopf, hence local, bifurcation \cite{DaPFiRe13}.

\begin{figure}[h]
\centering
\includegraphics[width=.6\textwidth]{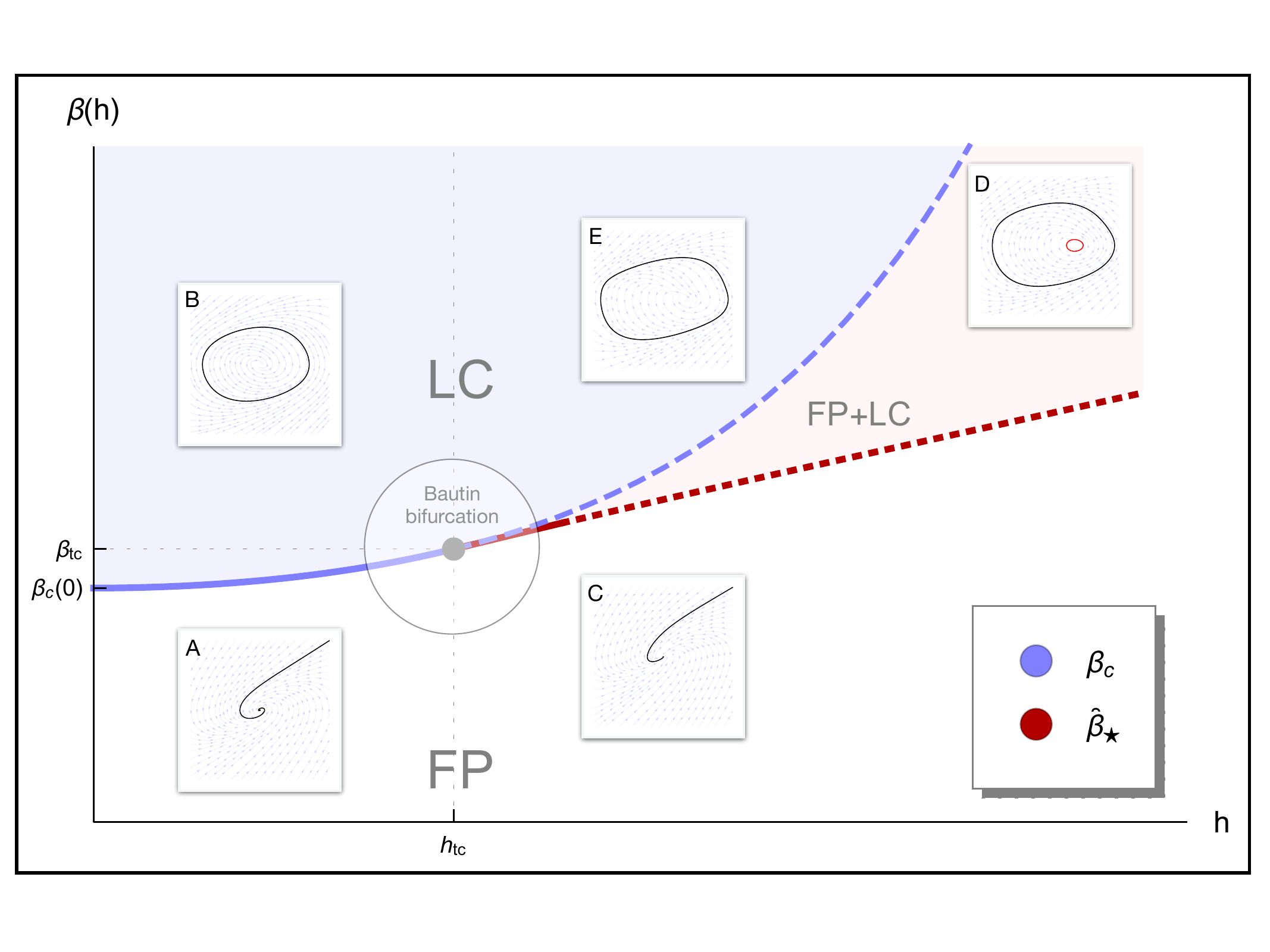}
\caption{Phase diagram for the macroscopic limit of the particle system \eqref{MicroSyst:InfGen} when choosing field distribution $\mu = p \delta_{-1} + (1-p) \delta_{+1}$ with $p=\frac{1}{3}$. Each colored region represents a phase with attractor(s) indicated by the label: FP = fixed point; LC = limit cycle; FP+LC = coexistence of fixed point and limit cycle. The blue separation curve $\beta_{\mathrm{c}} = \frac{3}{2} \cosh^2(h)$ is the Hopf bifurcation curve; in particular, it is solid when the bifurcation is supercritical and dashed otherwise. The dotted red line $\hat{\beta}_{\star}$ is qualitative, $p$-dependent and represents the saddle-node bifurcation of periodic orbits. The tricritical point $(h_{\mathrm{tc}},\beta_{\mathrm{tc}})$ is a Bautin bifurcation point. 
For each phase the vector field and a representative trajectory/attractor are displayed (in panel ~D also the unstable limit cycle is shown and it is colored red). They have been obtained numerically by setting the parameters as follows. Panel~A: $h=0.2$, $\beta=1$. Panel~B: $h=0.2$, $\beta=2$. Panel~C: $h=1$, $\beta=1$.   Panel~D: $h=1$, $\beta=4.5$. Panel~E: $h=1$, $\beta=3.5$.}
\label{Fig:PD_gen}
\end{figure}

\section{Proofs}\label{sect:proofs}

\subsection{Proof of Theorem~\ref{thm:macro_evolution}}
\label{subsect:proof_thm_PD}

To prove weak convergence in path-space we combine a compact containment condition with the convergence of generators.

\paragraph{Compact containment condition.} Notice that the processes $\{m_N^{\sigma}(t)\}_{t \geq 0}$ and $\{m_N^{\sigma \eta}(t)\}_{t \geq 0}$ are confined in $[-1,+1] \subset \mathbb{R}$ for each $N \in \mathbb{N}$. We are thus left with showing that the sequence of processes $\{\lambda_N(t)\}_{t \geq 0}$ is contained in a compact; we do it by proving compact containment for the process $\{\beta m_N^{\sigma}(t) - \lambda_N(t)\}_{t \geq 0}$.

For every $N \in \mathbb{N}$, let us define the stopping time $\tau_N^M := \inf \{ t \geq 0: \vert \beta m_N^{\sigma}(t) - \lambda_N(t) \vert \geq M\}$. We study the asymptotic behavior of the sequence $\{ \tau_N^M \}_{N \geq 1}$.

\begin{lemma}\label{lmm:compact_containment}
For any $T>0$ and $\varepsilon > 0$, there exists $M_{\varepsilon}>0$ such that $\sup_{N \geq 1} P(\tau_N^{M_{\varepsilon}} \leq T) \leq \varepsilon$.
\end{lemma}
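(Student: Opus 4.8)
The key observation is that the process $Z_N(t) := \beta m_N^{\sigma}(t) - \lambda_N(t)$ is, pathwise, far more regular than its constituents: the jumps of $\beta m_N^\sigma$ and of $\lambda_N$ cancel exactly. To see this, I would apply the generator \eqref{MicroSyst:InfGen} to the function $g(\sigma,\lambda) = \beta m_N^{\sigma} - \lambda$. At a flip of the $j$-th spin, $m_N^\sigma$ changes by $-\tfrac{2\sigma_j}{N}$ and $\lambda$ by $-\tfrac{2\beta\sigma_j}{N}$, so the increment of $g$ is $\beta\bigl(-\tfrac{2\sigma_j}{N}\bigr) - \bigl(-\tfrac{2\beta\sigma_j}{N}\bigr)=0$; meanwhile $-\lambda\,\partial_\lambda g = \lambda$. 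Hence $L_N^{\eta} g(\sigma,\lambda) = \lambda$, and since $g$ is invariant under every jump, $Z_N$ has no jump and no martingale part. I would therefore conclude the exact pathwise identity
\begin{equation*}
Z_N(t) = Z_N(0) + \int_0^t \lambda_N(s)\,ds,
\end{equation*}
so that $Z_N$ is absolutely continuous with $\dot Z_N = \lambda_N$. This is the crux of the argument, and also the only point requiring care: one must argue the cancellation at the level of trajectories (not merely in law) to rule out any martingale contribution.

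With this in hand the remaining analysis is deterministic. Writing $\lambda_N = \beta m_N^{\sigma} - Z_N$ and using $\vert m_N^{\sigma}\vert \le 1$ for every $N$, I obtain the a priori bound $\vert \dot Z_N(t)\vert = \vert \lambda_N(t)\vert \le \beta + \vert Z_N(t)\vert$. Integrating and applying Grönwall's inequality gives, for all $t \in [0,T]$,
\begin{equation*}
\vert Z_N(t)\vert \le \bigl(\vert Z_N(0)\vert + \beta T\bigr)\, e^{T},
\end{equation*}
a bound that is uniform in $N$ and depends on the randomness only through the initial value $Z_N(0)$.

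Finally I would control $Z_N(0) = \beta m_N^{\sigma}(0) - \lambda_N(0)$. Since $m_N^{\sigma}(0)$ is bounded and, by the hypothesis of Theorem~\ref{thm:macro_evolution}, $\lambda_N(0)$ converges weakly to the constant $\lambda_0$, the family $\{Z_N(0)\}_{N\ge 1}$ is tight; hence for the given $\varepsilon$ there exists $M_0$ with $\sup_{N\ge 1} P\bigl(\vert Z_N(0)\vert > M_0\bigr) \le \varepsilon$. Setting $M_{\varepsilon} := (M_0 + \beta T)\,e^{T} + 1$, the Grönwall bound shows that on the event $\{\vert Z_N(0)\vert \le M_0\}$ one has $\sup_{t\in[0,T]}\vert Z_N(t)\vert < M_{\varepsilon}$, and therefore $\tau_N^{M_{\varepsilon}} > T$. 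Consequently $\{\tau_N^{M_{\varepsilon}} \le T\} \subseteq \{\vert Z_N(0)\vert > M_0\}$, which yields $\sup_{N\ge 1} P(\tau_N^{M_{\varepsilon}} \le T) \le \varepsilon$ as required. The compact containment of $\{\lambda_N(t)\}$ on $[0,T]$ then follows, since $\lambda_N = \beta m_N^\sigma - Z_N$ with $m_N^\sigma$ bounded.
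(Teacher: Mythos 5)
Your proof is correct and rests on exactly the same key observation as the paper's: the jumps of $\beta m_N^{\sigma}$ and $\lambda_N$ cancel, so $Z_N=\beta m_N^{\sigma}-\lambda_N$ evolves deterministically along each trajectory, reducing the lemma to an a priori pathwise bound plus tightness of the initial data. The only (cosmetic) difference is that the paper applies the generator to $U=\tfrac{1}{2}Z_N^2$ and uses $\mathcal{L}_N U\le \beta^2/4$ to get a linearly growing bound, whereas you work with $Z_N$ itself and invoke Gr\"onwall, obtaining an exponential-in-$T$ bound that is equally sufficient here.
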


\begin{proof}
Let $M$ be an arbitrary strictly positive constant. Observe that
\[
P \left( \tau_N^M \leq T \right) = P \left( \sup_{0 \leq t \leq T \wedge \tau_N^M} \left\vert \beta m_N^{\sigma}(t) - \lambda_N(t) \right\vert \geq M \right).
\]
We want to show that the probability in the right-hand side of the previous display can be made arbitrarily small.

Consider the function $U: [-1,+1] \times \mathbb{R} \to \mathbb{R}$, given by $U(m^\sigma,\lambda) = \frac{1}{2}(\beta m^\sigma - \lambda)^2$. Since $U(m^\sigma \pm \frac{2}{N}, \lambda \pm \frac{2\beta}{N}) = U(m^\sigma,\lambda)$, the evolution of the process $\left\{ U \left( m_N^\sigma(t), \lambda_N(t) \right) \right\}_{t \geq 0}$ is deterministic and driven by the generator
\[
\mathcal{L}_N U(m^\sigma,\lambda) = - \lambda \partial_\lambda U(m^\sigma,\lambda) = - \lambda^2 + \beta m^\sigma \lambda,
\]
cf. equation \eqref{InfGen:OrdPar}. Notice that $\mathcal{L}_N U(m^\sigma,\lambda) \leq \frac{\beta^2}{4}$. Therefore, we have
\begin{align*}
U \left(m_N^\sigma(t), \lambda_N(t)\right) &= U \left(m_N^\sigma(0), \lambda_N(0)\right) + \int_0^t \mathcal{L}_N U \left( m_N^\sigma(s), \lambda_N(s) \right) ds \\[0.2cm]
&\leq U \left(m_N^\sigma(0), \lambda_N(0)\right) + \frac{\beta^2 t}{4},
\end{align*}
leading to
\begin{align*}
P \left( \sup_{0 \leq t \leq T \wedge \tau_N^M} \left\vert \beta m_N^{\sigma}(t) - \lambda_N(t) \right\vert \geq M \right) &= P \left( \sup_{0 \leq t \leq T \wedge \tau_N^M} U \left(m_N^{\sigma}(t),\lambda_N(t) \right) \geq \frac{M^2}{2} \right)\\[0.2cm]
&\leq P \left( U \left(m_N^\sigma(0), \lambda_N(0)\right) \geq \frac{M^2}{2} - \frac{\beta^2 T}{4} \right).
\end{align*}
The convergence in law of the initial condition implies $P(U \left(m_N^\sigma(0), \lambda_N(0)\right) \geq c(\varepsilon)) \leq \varepsilon$ for a sufficiently large $c(\varepsilon) >0$ and all $N \in \mathbb{N}$. As a consequence, to conclude it suffices to choose the constant $M = M_{\varepsilon}$ so that $\frac{M^2}{2} - \frac{\beta^2 T}{4} \geq c(\varepsilon)$.
\end{proof}

\paragraph{Convergence of the sequence of generators.} The infinitesimal generator of the process $\{(m_N^{\sigma}(t), m_N^{\sigma\eta}(t),\lambda_N(t))\}_{t \geq 0}$ is given in \eqref{InfGen:OrdPar}. We want to characterize the limit of the sequence $\{\mathcal{L}_N f\}_{N \geq 1}$ for $f \in C_c^2([-1,+1]^2 \times \mathbb{R})$, the set of two times continuously differentiable functions that are constant outside a compact set in the interior of $[-1,+1]^2 \times \mathbb{R}$. We first Taylor expand $f$ up to first order. For all $j, k \in \{-1,+1\}$, we get
\begin{multline*}
f \left( m^{\sigma} - \tfrac{2j}{N}, m^{\sigma \eta} - \tfrac{2jk}{N}, \lambda  -  \tfrac{2\beta j}{N} \right) - f \left(m^{\sigma}, m^{\sigma\eta}, \lambda \right) \\
=  - \tfrac{2j}{N} \, \partial_{m^{\sigma}} f \left(m^{\sigma}, m^{\sigma\eta}, \lambda \right)  - \tfrac{2jk}{N} \, \partial_{m^{\sigma\eta}} f \left(m^{\sigma}, m^{\sigma\eta}, \lambda \right) \\
 -  \tfrac{2\beta j}{N} \, \partial_\lambda  f \left(m^{\sigma}, m^{\sigma\eta}, \lambda \right) + O \left(\tfrac{1}{N}\right)
\end{multline*}
and then, by combining the terms with $\partial_{m^{\sigma}}$, with $\partial_{m^{\sigma\eta}}$ and the terms with $\partial_\lambda$, we get
\begin{align*}
\mathcal{L}_N f \left( m^{\sigma}, m^{\sigma\eta}, \lambda \right) &= \big\{ -2 m^{\sigma}  +  \left( 1 + \bar{\eta}_N \right) \tanh (\lambda + h)  +  \left( 1 - \bar{\eta}_N \right) \tanh (\lambda - h) \big\} \, \partial_{m^\sigma} f(-) \\
& + \big\{ -2 m^{\sigma\eta}  +  \left( 1 + \bar{\eta}_N \right) \tanh (\lambda + h)  -  \left( 1 - \bar{\eta}_N \right) \tanh (\lambda - h) \big\} \, \partial_{m^{\sigma \eta}} f(-) \\
& + \big\{ -\lambda+  \beta \big[  -  2 m^{\sigma} + \left( 1 + \bar{\eta}_N \right) \tanh (\lambda + h) +  \left( 1 - \bar{\eta}_N \right) \tanh (\lambda - h) \big] \big\} \, \partial_{\lambda} f(-) + O(1). 
\end{align*}
Observe that, in the limit as $N \to \infty$, the empirical average $\bar{\eta}_N$ converges to zero $\mu$-almost surely by the law of large numbers. Let $\mathcal{L}$ be the linear generator
\begin{align*}
\mathcal{L} f \left( m^{\sigma}, m^{\sigma\eta}, \lambda \right) &= \big\{ -2 m^{\sigma}  +  \tanh (\lambda + h)  +  \tanh (\lambda - h) \big\} \, \partial_{m^\sigma} f(-) \\
& + \big\{ -2 m^{\sigma\eta}  +  \tanh (\lambda + h)  -  \tanh (\lambda - h) \big\} \, \partial_{m^{\sigma \eta}} f(-) \\
& + \big\{ -\lambda +  \beta \big[  -  2 m^{\sigma} + \tanh (\lambda + h) + \tanh (\lambda - h) \big] \big\} \, \partial_{\lambda} f(-). 
\end{align*}
Since, for every $f \in C_c^2([-1,+1]^2 \times \mathbb{R})$ and any compact $K \subset \mathbb{R}^3$, we have
\[
\lim_{N \to \infty} \, \sup_{(m^{\sigma}, m^{\sigma\eta}, \lambda) \in K \cap E_N} \, \vert \mathcal{L}_N f (m^{\sigma}, m^{\sigma\eta},\lambda) - \mathcal{L} f (m^{\sigma}, m^{\sigma\eta},\lambda) \vert = 0,
\]
we obtain the convergence of $\mathcal{L}_N$ to $\mathcal{L}$, as $N$ tends to infinity.\\

To derive the weak convergence result we apply \cite[Cor.~4.8.16]{EtKu86}. We check the assumptions of the corollary are satisfied:
\begin{itemize}
\item
By Lemma~\ref{lmm:compact_containment} the sequence of processes $\{(m_N^{\sigma}(t), m_N^{\sigma\eta}(t), \lambda_N(t))\}_{t \geq 0}$ satisfies the compact containment condition.
\item
The set $C_c^2([-1,+1]^2\times \mathbb{R})$ is an algebra that separates points.
\item
The martingale problem for the operator $(\mathcal{L},C_c^2([-1,+1]^2 \times \mathbb{R}))$ admits a unique solution by \cite[Thm.~8.2.6]{EtKu86}.
\end{itemize}
The conclusion then follows.\\


\subsection{Proof of Theorem~\ref{Thm_PD}}

\emph{Local analysis.} System \eqref{MKV:reduced} admits only the fixed point $(0,0)$ in the phase plane $(m^{\sigma},\lambda)$. The linearization around this point gives
\begin{equation}\label{MKV:linearized}
\begin{pmatrix}
\dot{m}^{\sigma}\\
\dot{\lambda}
\end{pmatrix}
=
\begin{pmatrix}
-2 &    \frac{2}{\cosh^2(h)} \\
 -  2\beta & \frac{2\beta}{\cosh^2(h)} - 1
\end{pmatrix}
\begin{pmatrix}
m^{\sigma} \\
\lambda
\end{pmatrix}
\end{equation}
ant the eigenvalues of the system are
\[
k_{\pm} = \tfrac{\beta}{\cosh^2(h)} - \tfrac{3}{2} \pm \sqrt{\left( \tfrac{\beta}{\cosh^2(h)} - \tfrac{3}{2} \right)^2 - 2} \,.
\]
These eigenvalues have both negative real part for $\beta < \frac{3}{2} \cosh^2(h)$ and both positive real part for $\beta > \frac{3}{2} \cosh^2(h)$. As a consequence, if $\beta < \frac{3}{2}  \cosh^2(h)$, the origin is linearly stable; whereas, for $\beta > \frac{3}{2}  \cosh^2(h)$, it loses local stability. At the critical point $\beta = \frac{3}{2} \cosh^2(h)$ a Hopf bifurcation occurs. \\

\emph{Lyapunov number.} Set $\beta = \beta_c(h) = \frac{3}{2} \cosh^2(h)$. To understand whether the Hopf bifurcation is sub- or supercritical we compute the first Lyapunov number (or Lyapunov coefficient) associated with the origin. 
For a system cast in normal form at the bifurcation an explicit formula for such a number is given, see \cite[Sect.~3.4]{GuHo83}.  \\
The dynamical system \eqref{MKV:reduced} takes its normal form with respect to the new variables $x = \sqrt{2} (\lambda  -  \beta_c m^{\sigma})$  and $\lambda = \lambda$. The transformation yields
\begin{equation}\label{MKV:Lyapunov}
\left\{
\begin{array}{l}
\dot{x}_t = -\sqrt{2} \, \lambda_t \\
\dot{\lambda}_t = \sqrt{2} \, x_t - g_{\beta_c,h} (\lambda_t),
\end{array}
\right.
\end{equation}
with $g_{\beta_c,h}(\lambda) = 3\lambda - \beta_c [\tanh(\lambda+h)+ \tanh(\lambda-h)]$. Given \eqref{MKV:Lyapunov}, we compute the first Lyapunov number $\ell_1$ by means of formula (3.4.11) in \cite[Sect.~3.4]{GuHo83}. We readily obtain
\[
\ell_1 = \frac{3 \left[\cosh(2h)-2\right]}{8 \cosh^2(h)}.
\]
By \cite[Thm.~3.4.2, Sect.~3.4]{GuHo83}, we get that the Hopf bifurcation is subcritical whenever $\ell_1 > 0$ and supercritical otherwise. This corresponds to having subcriticality for $h < \frac{1}{2} \ln (2+\sqrt{3})$ and supercriticality for $h > \frac{1}{2} \ln (2+\sqrt{3})$. At the tricritical point $(h_{\mathrm{tc}}, \beta_{\mathrm{tc}}) = (\frac{1}{2} \ln (2 + \sqrt{3}), \frac{9}{4})$ we obtain $\ell_1 =0$. Then we determine the second Lyapunov number $\ell_2$ and repeat the same reasoning as above. Since $\ell_2 = - \frac{1}{360} < 0$, at the tricritical point the Hopf bifurcation is supercritical. We will sketch the steps to get $\ell_2$ in Appendix~\ref{App:second_Lyapunov_number}.\\
The above considerations also show that the point $(h_{\mathrm{tc}},\beta_{\mathrm{tc}})$ can be classified as a Bautin bifurcation point. It means that the scenario described in the statement of Theorem~\ref{Thm_PD} holds true locally, that is in a small neighborhood of the tricritical point, see \cite[Sect.~8.3]{Kuz98}. In the sequel we prove that the result is valid globally.\\


\emph{Global analysis.} By performing  the change of variables $y=2(\lambda  -  \beta m^{\sigma})$ and $\lambda=\lambda$, we can transform system \eqref{MKV:reduced} into the Li\'enard system 
\begin{equation}\label{MKV:Lienard}
\left\{
\begin{array}{l}
\dot{y}_t = -2  \lambda_t \\
\dot{\lambda}_t = y_t - g_{\beta, h} (\lambda_t),
\end{array}
\right.
\end{equation}
with $g_{\beta, h}(\lambda) = 3 \lambda - \beta \left[ \tanh(\lambda+h) + \tanh (\lambda-h)\right]$. Having at hand a Li{\'e}nard system is very convenient as many global properties of possible limit cycles are available and can be derived by studying the zeroes of the function $g_{\beta, h}$, see \cite[Sect.~3.8]{Per01}. For the sake of readability, we collect the interesting properties of function $g_{\beta,h}$ in Appendix~\ref{App:study_function_g}. We proceed with the analysis of the attractors. \\

\begin{enumerate}
\item
Case $h \leq \frac{1}{2} \ln (2 + \sqrt{3})$. In this small-disorder regime the system behaves as in absence of disorder. Exploiting properties \ref{fact:1} and \ref{fact:4} given in Appendix~\ref{App:study_function_g}, the phase diagram can be proven analogously to the case of the homogeneous model discussed in~\cite{DaPFiRe13}. We refer to the proof of Theorem~3.1 therein for details. \\
\item
Case $h > \frac{1}{2} \ln (2 +\sqrt{3})$. We show that in the current regime a saddle-node bifurcation of cycles occurs.

\begin{itemize}
\item[(iii)] 
$\beta \geq \beta_c(h)$. In this case the function $g_{\beta, h}$ is odd and has exactly one positive zero at $\lambda = \lambda^*$ (see properties \ref{fact:3} and \ref{fact:4} in Appendix~\ref{App:study_function_g}). Moreover, $g_{\beta,h}'(0) < 0$ and $g_{\beta,h}$ is monotonically increasing to infinity for $\lambda > \lambda^*$. Therefore, standard Li{\'e}nard's theorem guarantees existence and uniqueness of a stable periodic orbit. See \cite[Thm.~1, Sect.~3.8]{Per01}.\\ 
\item[(ii)]
The crucial ingredient for proving the next two statements is the particular structure of the vector field generated by \eqref{MKV:Lienard}. It defines a semicomplete one-parameter family of negatively \emph{rotated vector fields} (with respect to $\beta$, for fixed $h$), see \cite[Def.~1, Sect.~4.6]{Per01}. For dynamical systems depending on a parameter in this peculiar way, many results concerning bifurcations, stability and global behavior of limit and separatrix cycles are known \cite[Chap.~4]{Per01}. In particular, we will base our analysis on the following properties: 
\begin{enumerate}[label={\footnotesize (P\arabic*)},ref={\footnotesize (P\arabic*)}]
\item\label{property:1}
limit cycles expand/contract monotonically as the parameter $\beta$ varies in a fixed sense;
\item\label{property:2}
a limit cycle terminates either at a critical point or at a separatrix of~\eqref{MKV:Lienard};
\item\label{property:3}
cycles of distinct fields do not intersect.
\end{enumerate}
Properties \ref{property:1} and \ref{property:2} allow to explain the rise of a separatrix cycle whose breakdown is responsible for a saddle-node bifurcation of limit cycles at $\beta=\beta_{\star}(h)$. We have already proved that for $\beta \geq \beta_{\mathrm{c}}(h)$ a stable periodic orbit exists.  While decreasing $\beta$ from $\beta_{\mathrm{c}}(h)$ this orbit shrinks and, at the same time, the limit cycle appeared at the Hopf bifurcation point expands, until they collide producing the semistable cycle at $\beta=\beta_{\star}(h)$. Looking the same process forwardly, we see what is happening in this phase. When the separatrix splits increasing $\beta$ from $\beta_{\star}(h)$, it generates two limit cycles surrounding $(0,0)$. The inner periodic orbit is unstable (due to the subcritical Hopf bifurcation at $\beta=\beta_{\mathrm{c}}(h)$) and represents the boundary of the basin of attraction of the origin. Moreover, the outer limit cycle inherits the stability of the exterior of semistable cycle and so it is stable. See \cite[Thm.~2 and Fig.~1, Sect.~4.6]{Per01} for more details.\\
\item[(i)]  Consider the Lyapunov function $U(y, \lambda) := \frac{y^2}{4} + \frac{\lambda^2}{2}$. Observe that its total derivative $\dot{U}(y,\lambda) = - \lambda g_{\beta, h}(\lambda)$ is negative for every $y \in \mathbb{R}$ and for every $\lambda \geq \frac{2\beta}{3}$. Thus, there exists a stable domain for the flux of \eqref{MKV:Lienard} and, in particular, the trajectories can not escape to infinity as $t \to +\infty$.  To conclude it is sufficient to prove that in this regime the dynamical system \eqref{MKV:Lienard} does not admit a limit cycle. Indeed, the non-existence of periodic orbits together with the existence of a stable domain for the flux guarantee that every trajectory must converge to a fixed point as $t \to +\infty$.  \\
Therefore, it remains to show that no limit cycle exists for $\beta < \beta_{\star}(h)$. From properties \ref{property:1} and \ref{property:2} it follows that, as $\beta$ increases from $\beta_{\star}(h)$ to infinity, the external stable limit cycle expands and its motion covers the entire region outside the separatrix. Similarly, the inner unstable cycle contracts from it and terminates at the origin. As a consequence, for $\beta \geq \beta_{\star}(h)$ the whole phase space is covered by expanding or contracting periodic orbits. From property \ref{property:3} we can deduce that  no periodic trajectory may exist  for $\beta < \beta_{\star}(h)$, as such an orbit would cross some of the cycles present for $\beta \geq \beta_{\star}(h)$.
\end{itemize}
At $\beta=0$ the differential system \eqref{MKV:Lienard} has solution $y_t = c_1 e^{-t} + c_2 e^{-2t}$, $\lambda_t = 2 c_1 e^{-t} + c_2 e^{-2t}$ ($c_1, c_2 \in \mathbb{R}$), excluding the possible existence of periodic solutions. Therefore $\beta_{\star}(h) > 0$ for all $h > h_{\mathrm{tc}}$. This concludes the proof.
\end{enumerate}

\appendix 
\renewcommand{\thefigure}{\Alph{section}.\arabic{figure}}

\section{Derivation of the second Lyapunov number}
\label{App:second_Lyapunov_number}

To determine the second Lyapunov number $\ell_2$ associated with the origin we follow the center manifold approach in \cite[pp. 175--181]{Kuz98}. In particular, we rely on the review of the method done in \cite[Sect.~3]{SoMedeCB07}, which is stated very clearly and in detail. We keep the same notation as therein and we report here only the relevant quantities and the main steps.\\

Consider the dynamical system \eqref{MKV:Lyapunov} and set $h = h_{\mathrm{tc}} = \frac{1}{2} \ln (2+\sqrt{3})$ and $\beta_c = \beta_c (h_{\mathrm{tc}}) = \beta_{\mathrm{tc}} = \frac{9}{4}$. We Taylor expand the second equation of \eqref{MKV:Lyapunov} around $\lambda=0$ up to the fifth order. It yields
\begin{equation}\label{MKV:Taylor_expansion}
\left\{
\begin{array}{l}
\dot{x}_t = -\sqrt{2} \, \lambda_t \\
\dot{\lambda}_t = \sqrt{2} \, x_t - \frac{4}{15} \lambda_t^5 + o\left(\lambda_t^6\right).
\end{array}
\right.
\end{equation}
Referring to \cite[eqns. (13--17)]{SoMedeCB07}, from \eqref{MKV:Taylor_expansion} we get
\[
A =
\begin{footnotesize}\begin{pmatrix}
0 & -\sqrt{2} \\
\sqrt{2} & 0
\end{pmatrix}\end{footnotesize}, 
\quad
B = C = D =
\begin{footnotesize}\begin{pmatrix}
0\\
0
\end{pmatrix}\end{footnotesize}
\quad \text{and} \quad
E \left( 
\begin{footnotesize}\begin{pmatrix}
v_1\\
w_1
\end{pmatrix}\end{footnotesize}, 
\dots, 
\begin{footnotesize}\begin{pmatrix}
v_5\\
w_5
\end{pmatrix}\end{footnotesize} 
\right)=
\begin{footnotesize}\begin{pmatrix}
0\\
-\frac{4}{15} w_1 w_2 w_3 w_4 w_5
\end{pmatrix}\end{footnotesize}.
\]
Moreover the eigenvalues of the matrix $A$ are $\pm i \sqrt{2}$ with corresponding vectors $\mathbf{p} = \mathbf{q} = \begin{scriptsize}\begin{pmatrix} 1/\sqrt{2} \\ - i/\sqrt{2}\end{pmatrix}\end{scriptsize}$, see \cite[eqn.~(20)]{SoMedeCB07}. Since in our case $G_{21}=0$, cf. \cite[eqn.~(27)]{SoMedeCB07}, we get 
\[
\mathcal{H}_{32} = E \left(\mathbf{q}, \mathbf{q}, \mathbf{q}, \bar{\mathbf{q}}, \bar{\mathbf{q}} \right) = \begin{pmatrix} 0 \\ -\frac{4}{15} \left( - \frac{i}{\sqrt{2}} \right)^3 \left( \frac{i}{\sqrt{2}} \right)^2 \end{pmatrix} = \begin{pmatrix} 0 \\ \frac{i}{15\sqrt{2}}\end{pmatrix}.
\]
From \cite[eqn.~(39)]{SoMedeCB07} we finally obtain 
\[
\ell_2 = \frac{1}{12} \mathrm{Re} \big( \langle \mathbf{p}, \mathcal{H}_{32} \rangle \big) =  - \frac{1}{360},
\]
giving the conclusion. We recall that, for any $\mathbf{v}, \mathbf{w} \in \mathbb{C}^2$, we have $\langle \mathbf{v}, \mathbf{w} \rangle := \sum_{k=1}^2 \bar{v}_k w_k$.

\section{Study of the family of functions $g_{\beta,h}(\lambda)$}\label{App:study_function_g}

We are interested in computing the zeros of the function $g_{\beta,h}(\lambda) = 3 \lambda - \beta [\tanh(\lambda + h) + \tanh(\lambda-h)]$. Equivalently, we look for solutions to the fixed point equation
\begin{equation}\label{Eqn:fixed_point}
\lambda = \Gamma_{\beta, h} (\lambda) \quad \mbox{ with } \quad \Gamma_{\beta, h} (\lambda) = \tfrac{\beta}{3} \left[ \tanh (\lambda+h)+\tanh(\lambda-h) \right] \,.
\end{equation}
It follows from \eqref{Eqn:fixed_point} that
\begin{itemize}
\item $\lambda \longmapsto \Gamma_{\beta,h}(\lambda)$ is a continuous function for all the values of $\beta$ and $h$;
\item $\lim_{\lambda \to \pm \infty} \Gamma_{\beta,h}(\lambda) = \pm \tfrac{2\beta}{3}$;
\item $\Gamma_{\beta,h}'(\lambda) = \frac{\beta}{3} \left[ \frac{1}{\cosh^2(\lambda+h)} + \frac{1}{\cosh^2(\lambda-h)} \right] > 0$ for every $\lambda$, for all values of $\beta$ and $h$.
\end{itemize}
Since $\Gamma_{\beta, h} (\lambda)$ is an odd function with respect to $\lambda$, we have $\Gamma_{\beta, h} (0) = 0$ for all $\beta$ and $h$, so that \eqref{Eqn:fixed_point} always admits the solution $\lambda = 0$. Now, we investigate under what conditions positive solutions $\lambda > 0$ may occur. We restrict to work in the positive half-line.\\
In general, if
\begin{equation}\label{Cond:2nd_threshold}
\Gamma_{\beta, h}' (0) = \frac{2\beta}{3\cosh^2(h)} > 1\,,
\end{equation}
then there is at least one positive  solution. However, since $\Gamma_{\beta, h} (\lambda)$ is not always concave, there may be a positive solution even when \eqref{Cond:2nd_threshold} fails. In this case, there must be at least two positive solutions (corresponding to the curve $\lambda \longmapsto \Gamma_{\beta, h} (\lambda)$, crossing the diagonal first from below and then from above). We study the sign of the second order derivative to have a more precise picture. We compute
\[
\Gamma_{\beta,h}''(\lambda) = -\frac{2 \beta}{3} \left[ \frac{\tanh(\lambda+h)}{\cosh^2(\lambda+h)} + \frac{\tanh(\lambda-h)}{\cosh^2(\lambda-h)} \right] \,.
\]
Notice that $\Gamma_{\beta,h}''(0) = 0$ for every $\beta$ and $h$. Therefore $\lambda = 0$ is an inflection point for all the values of the parameters. We search for other possible inflection points. We have
\[
\Gamma_{\beta,h}''(\lambda) \geq 0 \quad \Longleftrightarrow \quad \sinh(\lambda+h)\cosh^3(\lambda-h)+\sinh(\lambda-h)\cosh^3(\lambda+h) \leq 0. 
\]
%
By using the definitions of hyperbolic sine and cosine, after a few algebraic manipulations, we get
\[
\Gamma_{\beta,h}''(\lambda) \geq 0 \quad \Longleftrightarrow \quad \cosh(2\lambda) \leq \frac{\cosh(4h)-3}{2\cosh(2h)}.
\]
The last inequality is never satisfied if $\frac{\cosh(4h)-3}{2\cosh(2h)} < 1$, while it is equivalent to $\lambda \leq \frac{1}{2} \operatorname{arccosh} \left[ \frac{\cosh(4h)-3}{2\cosh(2h)} \right]$ whenever $\frac{\cosh(4h)-3}{2\cosh(2h)} \geq 1$.
Since, by exploiting the identity $\cosh(4h) = 2\cosh^2(2h)-1$, we obtain 
%
%
\[
\frac{\cosh(4h)-3}{2\cosh(2h)} \geq 1 \quad \Longleftrightarrow \quad \cosh^2(2h) -\cosh(2h) -2 \geq 0 \quad \Longleftrightarrow \quad   h \geq \frac{1}{2} \ln (2 + \sqrt{3}),
\]
it follows that
\begin{itemize}
\item 
for $h < \frac{1}{2} \ln (2 + \sqrt{3})$, the function $\lambda \longmapsto \Gamma_{\beta,h}(\lambda)$ is strictly concave for $\lambda > 0$;
\item 
for $h > \frac{1}{2} \ln (2 + \sqrt{3})$, there is a positive inflection point at $\lambda_{\mathrm{I}} = \frac{1}{2} \operatorname{arccosh} \left[ \frac{\cosh(4h)-3}{2\cosh(2h)} \right]$ such that the function $\lambda \longmapsto \Gamma_{\beta,h}(\lambda)$ is strictly convex for $0 < \lambda < \lambda_{\mathrm{I}}$ and strictly concave for $\lambda > \lambda_{\mathrm{I}}$; 
\item 
for $h = \frac{1}{2} \ln (2 + \sqrt{3})$,  it yields $\lambda_{\mathrm{I}} = 0$.
\end{itemize}

To conclude, the function $\Gamma_{\beta,h}$ has at most one inflection point and therefore it changes curvature at most once. As a consequence, we obtain the following results concerning the number of positive solutions of the fixed point equation \eqref{Eqn:fixed_point}. 

\begin{enumerate}[label={\footnotesize (F\arabic*)},ref={\footnotesize (F\arabic*)}]
\item \label{fact:1}
If $h \leq h_{\mathrm{tc}}$ and $\beta \leq \beta_{\mathrm{c}}(h)$, then the curve $\Gamma_{\beta,h}(\lambda)$ is strictly concave on $(0,+\infty)$ and hence there is no intersection with the diagonal.
\item 
If $h > h_{\mathrm{tc}}$ and $\beta < \beta_{\mathrm{c}}(h)$, the function $\Gamma_{\beta,h}$ changes its curvature either below or above the diagonal, giving rise to none or two positive fixed points. As the mapping $\beta \mapsto \Gamma_{\beta,h}$ is increasing, the two regions are delimited by the separation curve $\beta_{\mathrm{T}}(h)$ ($\leq \beta_{\mathrm{c}}(h)$), corresponding to the choice of parameters for which there exists $\lambda_\ast > 0$ such that $\Gamma_{\beta,h}(\lambda_\ast) = \lambda_\ast$ and $\Gamma_{\beta,h}'(\lambda_\ast) = 1$. More precisely, we have
\begin{enumerate}[label={\footnotesize \roman*.},ref={\footnotesize (F2\roman*)}]
\item \label{fact:2a}
for $\beta < \beta_{\mathrm{T}}(h)$ there is no intersection with the diagonal;
\item \label{fact:2b}
for $\beta_{\mathrm{T}}(h) < \beta < \beta_{\mathrm{c}}(h)$, the diagonal and the curve $\Gamma_{\beta,h}$ intersect two times. 
\end{enumerate}
\item \label{fact:3}
If $h > h_{\mathrm{tc}}$ and $\beta = \beta_{\mathrm{c}}(h)$, there is exactly one positive solution of \eqref{Eqn:fixed_point}. 
\item \label{fact:4}
If $\beta > \beta_{\mathrm{c}}(h)$, no matter the curvature, the function $\Gamma_{\beta,h}(\lambda)$ crosses the diagonal at precisely one positive $\lambda$.
\end{enumerate}

\begin{remark*}
Recall that the number of solutions to the fixed point equation \eqref{Eqn:fixed_point} corresponds to the number of zeroes of the function $g_{\beta, h}$ and that, in turn, this number relates to the number of possible limit cycles for \eqref{MKV:Lienard} via Li{\'e}nard's theorem and generalizations \cite[Sect.~3.8]{Per01}.
Observe that the existence of two periodic orbits is consistent with property \ref{fact:2b}, in the sense that having a second positive zero of $g_{\beta,h}$ breaks the condition for uniqueness of the periodic orbit prescribed by Li{\'e}nard's theorem. However, having two positive zeroes is necessary to possibly have more than one limit cycle, but not sufficient to have exactly two. Roughly speaking, for the latter instance to happen, we need that the well relative to the minimum of $g_{\beta,h}$ reaches a sufficient depth as $\beta$ varies (precise conditions are given in \cite[Thm.~B]{Oda96}). The threshold value $\beta_{\mathrm{T}}$ is the value of $\beta$ for which the function $g_{\beta,h}$ is tangent to the $\lambda$-axis and, therefore, after which there are two positive intersections with such an axis; whereas, $\beta_{\star}$ is the value of $\beta$ at which the well of the minimum becomes sufficiently deep so that each zero corresponds to a periodic solution of \eqref{MKV:Lienard}. As a consequence, the value $\beta_{\mathrm{T}}$ is a \emph{lower bound} for $\beta_{\star}$. See Figure~\ref{Fig:PD+LB}.
\end{remark*}

\begin{figure}[h!]
\centering
\subfigure[]{\includegraphics[scale=0.7]{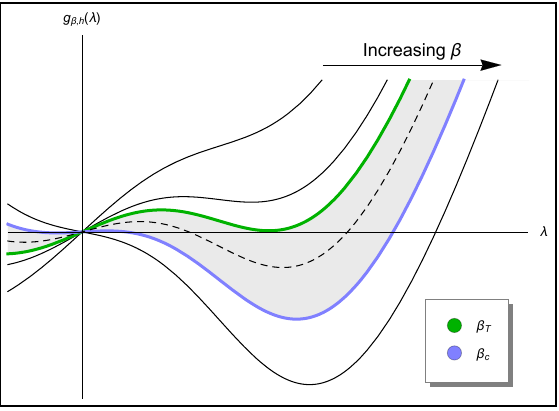}} \:
\subfigure[]{\includegraphics[scale=0.784]{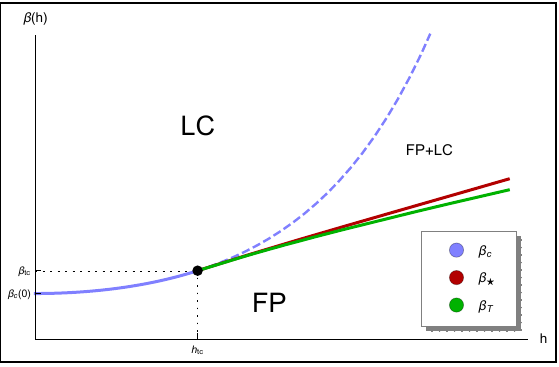}}
\caption{In panel (a) the dependence of the family $g_{\beta,h}$ on the parameter $\beta$ is shown for a fixed value of $h$. The curves corresponding to $\beta_T$ (green solid) and $\beta_c$ (blue solid) are highlighted. They are respectively the threshold values for the transitions $0$-to-$2$ and $2$-to-$1$ crossings between $g_{\beta,h}$ and the $\lambda$-axis. The curve corresponding to $\beta=\beta_{\star}$, where periodic solutions appear, lies somewhere in the shadowed region.
In panel (b), we display the separation curves in the phase diagram of system \eqref{MKV:Lienard} together with the lower bound $\beta_{\mathrm{T}}$ for the threshold value $\beta_{\star}$. Both the curves $\beta_{\mathrm{T}}(h)$ and $\beta_{\star}(h)$, for $h \geq h_{\mathrm{tc}}$, have been obtained numerically.
}
\label{Fig:PD+LB}
\end{figure}

\bigskip



%
\paragraph{Acknowledgments.} The authors are grateful to the anonymous referees whose comments and remarks led to a significant improvement of the paper. The authors wish to thank Paolo Dai Pra for fruitful and inspiring  discussions. FC was supported by The Netherlands Organisation for Scientific Research (NWO) via TOP-1 grant 613.001.552. 
%



\bibliographystyle{abbrv}

\end{document}